%

\documentclass[dvips]{imsart}

\usepackage{amsthm,amsmath}
\usepackage{amsmath,amsthm,amsfonts,epsfig,amssymb,eucal,eufrak}
\usepackage{enumitem}
\usepackage{color}
\usepackage{bbm}

\newtheorem{lem}{Lemma}
\newtheorem{thm}{Theorem}

\doi{10.1214/154957804100000000}
\pubyear{0000}
\volume{0}
\firstpage{0}
\lastpage{0}
\arxiv{1302.0924v3}

\startlocaldefs
\endlocaldefs

\begin{document}

\begin{frontmatter}

\title{On the Nile Problem by Sir Ronald Fisher
}
\runtitle{Fisher Nile Problem}


\begin{aug}
\author{\fnms{Abram M.} \snm{Kagan}\ead[label=e1]{amk@math.umd.edu}}
\and
\author{\fnms{Yaakov} \snm{Malinovsky}\thanksref{t2}\ead[label=e2]{yaakovm@umbc.edu}}

\address{Department of Mathematics, University of Maryland, College Park, MD 20742, USA\\
and\\
Department of Mathematics and Statistics, University of Maryland, Baltimore County, Baltimore, MD 21250, USA\\
\printead{e1,e2}}

\thankstext{t2}{Corresponding Author}
\runauthor{Kagan and Malinovsky}


\end{aug}


\begin{abstract}
The Nile problem by Ronald Fisher may be interpreted as the problem of making statistical inference
for a special curved exponential family when the minimal sufficient statistic is incomplete. The problem itself and its versions for general curved exponential families
pose a mathematical-statistical challenge: studying  the subalgebras of ancillary statistics within the $\sigma$-algebra of the (incomplete) minimal sufficient statistics
and closely related questions of the structure of UMVUEs.

In this paper a new method is developed that, in particular, proves that in the classical Nile problem no statistic subject to mild natural conditions is a UMVUE.
The method almost solves an old problem of the existence of UMVUEs. The method is purely statistical (vs. analytical) and works for any family possessing an ancillary statistic.
It complements an analytical method that uses only the first order ancillarity (and thus works when the existence of ancillary subalgebras is an open
problem) and works for curved exponential families with polynomial constraints on the canonical parameters of which the Nile problem is a special case.
\end{abstract}

\begin{keyword}[class=AMS]
\kwd[Primary ]{62B05}
\kwd{}
\kwd[; secondary ]{62F10}
\end{keyword}

\begin{keyword}
\kwd{ Ancillarity}
\kwd{complete sufficient statistics}
\kwd{curved exponential families}
\kwd{UMVUEs}
\end{keyword}

\received{\smonth{6} \syear{2013}}
\tableofcontents
\end{frontmatter}

\section{Introduction}
\label{sec:intr}
The so called Nile problem formulated by Fisher gave rise to interesting mathematical -statistical problems.
The original statement of the problem in Fisher's unique style is in Fisher \cite{r11} (it is cited verbatim in Fisher \cite{r12}, pp. 122):\\
{\it
The agricultural land of a pre-dynastic Egyptian village is of
unequal fertility. Given the height to which the Nile will rise, the
fertility of every portion of it is known with exactitude, but the
height of the flood affects different parts of the territory unequally.
It is required to divide the area, between the several households of
the village, so that the yields of the lots assigned to each shall be in
pre-determined proportion, whatever may be the height to which
proportion the river rises.
}

Fisher himself (\cite{r12}, pp. 169) specified the problem as making statistical inference for a population with density
\begin{equation}
\label{eq:Fisher}
\displaystyle
f(x,y;\theta)=e^{-\left(x \theta+y/\theta\right)},\,\,\, x>0, y>0,
\end{equation}
with $\displaystyle \theta>0$ as a parameter.

If $\displaystyle \left(\left(X_1, Y_1\right),\ldots,\left(X_n, Y_n\right)\right)$ is a sample from population \eqref{eq:Fisher},
the pair $\displaystyle \left(\overline{X}, \overline{Y}\right)$ of the sample means is an incomplete (minimal) sufficient statistic
for $\displaystyle \theta$. Due to incompleteness, there might exist (and in this setting actually exists) an ancillary statistic, (i.e., a statistic
whose distribution does not depend on the parameter),
in the $\displaystyle \sigma$-$\displaystyle \text{algebra}$ $\displaystyle \sigma\left(\overline{X}, \overline{Y}\right)$ generated by $\displaystyle \left(\overline{X}, \overline{Y}\right)$.

Due to incompleteness of the minimal sufficient statistic, the existence and construction of UMVUEs do not follow from the Rao-Blackwell and Lehmann-Scheff\'{e} theorems and become a nontrivial problem which requires a new approach. Nayak and Sinha \cite{r35} mentioned the existence of UMVUEs in the models \eqref{eq:Fisher} and \eqref{eq:NS} (see below)
as an open problem.

Another interpretation of the Nile problem due to Flatto and Shepp \cite{r13} is statistical inference on the correlation coefficient $\displaystyle \rho$
of a bivariate Gaussian vector with density
\begin{equation}
\label{eq:BGauss}
{\displaystyle
\varphi\left(x,y;\rho\right)=\frac{1}{2\pi \sqrt{1-\rho^2}}e^{-\frac{x^2+y^2-2\rho xy}{2(1-\rho^2)}}.
}
\end{equation}
The minimal sufficient statistic for $\displaystyle \rho$ is $\displaystyle \left(X^2+Y^2, XY\right)$.
Note that the pair $\displaystyle \left(X^2+Y^2, XY\right)$ is in one-to-one correspondence with a quadrable $\displaystyle \left( (X,Y), (Y,X),\right.$
\\$\left. (-X,-Y), (-Y,-X)\right)$.
If $\displaystyle \left(X_1, Y_1\right),\ldots,\left(X_n, Y_n\right)$ is a sample from \eqref{eq:BGauss},
the minimal sufficient statistic for $\displaystyle \rho$ is $\displaystyle \left(\sum_{i=1}^{n}(X_i^2+Y_i^2), \sum_{i=1}^{n}X_iY_i\right)$.
The minimal sufficient statistic is again incomplete (even in case of $n=1$), and the problem of
the existence of an ancillary statistic in the $\sigma$-algebra $\displaystyle \sigma\left(\sum_{i=1}^{n}(X_i^2+Y_i^2), \sum_{i=1}^{n}X_iY_i\right)$ generated by
$\displaystyle \left(\sum_{i=1}^{n}(X_i^2+Y_i^2), \sum_{i=1}^{n}X_iY_i\right)$ remains open (see Lehmann and Romano \cite{r33}, pp. 397-398).

The following observation by Flatto and Shepp \cite{r13} solves a related, but different problem.
Let $\displaystyle A$ be a set in $\displaystyle \mathbb{R}^2$ with finite Lebesgue measure, $\displaystyle \lambda(A)<\infty$.
If $\displaystyle A$ is ancillary, i.e.,
\begin{equation*}
\displaystyle \iint_{A} \varphi\left(x,y;\rho\right)dxdy=c,\,\,\,\text{a constant},
\end{equation*}
then $\displaystyle \lambda(A)=0$ (and thus $c=0$). The condition $\displaystyle \lambda(A)<\infty$ which is essential in the proof, seems
artificial from the statistical point of view.
However, a first order ancillary statistic $\displaystyle H(X,Y)$ (i.e., such that $\displaystyle E_{\rho}H(X,Y)=\text{const}$) measurable with respect to $\displaystyle \sigma\left({X^2+Y^2}, {XY}\right)$ exists.
Indeed, set
$$\displaystyle H(x,y)=H(x)+H(y),$$
where $\displaystyle H(u)=\mathbbm{1}_{\{|u|\leq 1\}}.$
One can easily see that $\displaystyle H(x,y)=H(y,x)=H(-x,-y)$ and $\displaystyle E_{\rho}H\left(X,Y\right)=\text{const}$
since the marginal distributions of $\displaystyle X$ and $\displaystyle Y$ do not involve $\displaystyle \rho$.
Though $\displaystyle \int\int H(x,y)dxdy=\infty$, the finiteness of this integral is not required in the definition of the first order ancillarity.

From general results on curved exponential family in Kagan and Palamodov \cite{r20, r21}
(see also supplement in Linnik \cite{r34}, and for another proof see Unni \cite{r39})
it follows that the only UMVUEs from a sample $\displaystyle \left(\left(X_1, Y_1\right),\ldots,\left(X_n, Y_n\right)\right)$
from \eqref{eq:BGauss} are constants.

Note that from the analytical point of view, inference problems for a sample $\displaystyle (X_{1},\ldots,X_{n})$ from a population with density
\begin{equation}
\label{eq:NS}
\displaystyle
f(x;\theta)=\frac{1}{\sqrt{2\pi}c\theta}e^{-\frac{(x-\theta)^2}{2c^2\theta^2}}
\end{equation}
with $\displaystyle \theta>0$ as a parameter, $\displaystyle c>0$ known, are very close to the Nile problem.
The assumption that the standard deviation is proportional to the mean seems reasonable in the setup of direct measurements.
These problems were studied in a number of papers
(see, e.g., Khan \cite{r30}, Gleser and Healy \cite{r15}, Hinkley \cite{r17}).
The minimal sufficient statistic for $\displaystyle \theta$ is a pair $\displaystyle \left(\overline{X}, S\right)$
of the sample mean and standard deviation. The sufficient statistic is incomplete and there exists a convenient ancillary statistic in $\displaystyle \sigma\left(\overline{X}, S\right)$.
Combining this with results from Rao \cite{r38} on the structure of UMVUEs and Kagan \cite{r19}, Barra \cite{r3} and Bondesson \cite{r6} on sufficiency, we prove by purely statistical
tools that the only UMVUEs are constants. The statistical method works for any family $\mathcal{P}=\{P_{\theta}, \theta \in \Theta\}$  of distributions on $(\mathcal{X}, \mathcal{A})$ possessing an ancillary subalgebra $\mathcal{B}$, i.e., such that $P_{\theta}(B)=\text{const}$ in $\theta$ for all $B \in \mathcal{B}$.

An analytical proof using general results for curved exponential families
can be found in Kagan and Palamodov \cite{r20, r21, r22} and the dissertation of Unni \cite{r39}. In Section \ref{se:ex} we discuss some examples that cannot be treated by the analytical method but yield to the statistical method.

The following observation is due to I. Pinelis (private communication). Let $\displaystyle X$ be distributed according to \eqref{eq:NS}. Then the statistic
$\displaystyle \mathbbm{1}_{\{X>0\}}$ is ancillary, so that $\displaystyle X$ is incomplete. He conjectured that if the parameter space is $\displaystyle \mathbb{R}\setminus\{0\}$,
then $X$ is complete. If so, it is an interesting phenomenon.

Extrapolating from the above three different interpretations of the Nile problem, the following problem seems to be of a general interest. Let
$\displaystyle \mathcal{P}=\{P_{\theta},\,\,\theta \in \Theta\}$ be a family of probability distributions on a measurable space
$(\mathcal{X}, \mathcal{A})$, and let $T:(\mathcal{X}, \mathcal{A})\rightarrow (\mathcal{T}, \mathcal{C})$ be an incomplete sufficient statistic for $\theta$.
Set $\mathcal{\widetilde{A}}=T^{-1}\mathcal{C}$. Describe, if they exist, $\mathcal{\widetilde{A}}$-measurable (i.e., function of $T$) ancillary statistics.

\section{Sufficiency, Ancillarity and UMVUEs}
Let $\mathcal{P}=\{P_{\theta}, \theta \in \Theta\}$ be a family of probability distributions parameterized by a general parameter $\theta$
of a random element $X$ taking values in a measurable space $(\mathcal{X}, \mathcal{A})$.
A subalgebra $\mathcal{B} \subset \mathcal{A}$ is called ancillary if $P_{\theta}(B)=\text{const}$ in $\theta$ for all $B \in \mathcal{B}$.
A statistic $T(X)$ is called ancillary if the the subalgebra it generates is ancillary.
A statistic $T(X)$ taking values in $\mathbb{R}$ with $E_{\theta}|T(X)|<\infty$ is called a first order ancillary
if $E_{\theta}T(X)=\text{const}$ in $\theta$. A well known theorem due to Basu \cite{r4, r5} says that if $\mathcal{P}$
is a linked family, i.e., for any pair $\theta^{'}, \theta^{''}\in \Theta$ there exist a sequence $\theta_{0}, \theta_{1},\ldots,\theta_{n}, \theta_{n+1}$
with $\theta_{0}=\theta^{'}, \theta_{n+1}=\theta^{''}$ such that $P_{\theta_{j}}, P_{\theta_{j+1}}$ are not mutually singular
(i.e., $P_{\theta_{j}}(A)=1 \Rightarrow P_{\theta_{j+1}}(A)>0$), then any subalgebra $\mathcal{B}$ which is $\mathcal{P}$-independent
of a sufficient subalgebra $\mathcal{\widetilde{A}}$ (i.e., $P({\widetilde{A}}\cap B)=P({\widetilde{A}})P(B)$ for any
$\widetilde{A}\in \mathcal{\widetilde{A}}, B\in \mathcal{B}$ and $\theta \in \Theta$ ) is ancillary.
A straightforward conversion of the Basu result is plainly false: there exist ancillary subalgebras within the algebra of sufficient statistics (see examples 1, 2 below).

Note in passing that if $\mathcal{C}$ is a complete sufficient algebra, then any ancillary algebra is $\mathcal{P}$-independent
of $\mathcal{C}$ (this result does not require $\mathcal{P}$ to be a linked family). In this case there is no $\mathcal{C}$-measurable
ancillary statistic.

\noindent
{\bf Example 1}.\\
Let $\displaystyle \left(\left(X_1, Y_1\right),\ldots,\left(X_n, Y_n\right)\right)$ be a sample from
$$f(x,y;\theta)=e^{-\left(x \theta+y/\theta\right)},\,\,\, x>0, y>0, \theta>0.$$
The minimal sufficient statistic is ($\overline{X},\overline{Y}$), and one can easily see that $\overline{X}\,\overline{Y}$
is an ancillary statistic.\\
\noindent
{\bf Example 2}.\\
Let $\displaystyle (X_{1},\ldots,X_{n})$ be a sample from
$$f(x;\theta)=\frac{1}{\sqrt{2\pi}\theta}e^{-\frac{(x-\theta)^2}{2\theta^2}},\,\,\, \theta>0.$$
The minimal sufficient statistic is ($\overline{X}, S$), and one can check that the statistic $\overline{X}/S$
is ancillary.

However, if a subalgebra $\mathcal{C}\subset \mathcal{A}$ which is $\mathcal{P}$-independent of an ancillary subalgebra $\mathcal{B}$
is large enough, then $\mathcal{C}$ is sufficient for $\mathcal{P}$. This observation is due to Kagan \cite{r19}
and independently Barra \cite{r3} and Bondesson \cite{r6}.
\begin{lem}
\label{lem:KB}
Suppose that a subalgebra $\mathcal{C}$ is $\mathcal{P}$-independent of an ancillary algebra $\mathcal{B}$
and together with $\mathcal{B}$ generates $\mathcal{A}$, i.\,e., $\mathcal{A}$ is the smallest $\sigma$-algebra that contains both
$\mathcal{C}$ and $\mathcal{B}$, $\displaystyle \sigma(\mathcal{C}, \mathcal{B})=\mathcal{A}$. Then $\mathcal{C}$ is sufficient for $\mathcal{P}$.
\end{lem}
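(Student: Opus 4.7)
\medskip

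\noindent\emph{Proof proposal.} The natural strategy is to verify sufficiency of $\mathcal{C}$ by exhibiting, for each $A\in\mathcal{A}$, a single $\mathcal{C}$-measurable function that serves as a version of $P_{\theta}(A\mid\mathcal{C})$ for every $\theta\in\Theta$. The plan is to construct such versions first on the $\pi$-system
$\mathcal{E}=\{C\cap B:C\in\mathcal{C},\,B\in\mathcal{B}\}$, which is manifestly closed under intersection and, since it contains $\mathcal{C}$ and $\mathcal{B}$ individually (take $B=\mathcal{X}$ or $C=\mathcal{X}$), generates $\sigma(\mathcal{C},\mathcal{B})=\mathcal{A}$; and then to extend to all of $\mathcal{A}$ by a Dynkin ($\pi$-$\lambda$) argument.

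For a fixed $C\cap B\in\mathcal{E}$, I would take as candidate version the function $g_{C\cap B}=c_{B}\,\mathbbm{1}_{C}$, where $c_{B}:=P_{\theta}(B)$ does not depend on $\theta$ by ancillarity of $\mathcal{B}$. To confirm that $g_{C\cap B}$ really is a version of $P_{\theta}(C\cap B\mid\mathcal{C})$, one checks that for every $C'\in\mathcal{C}$,
$$
\int_{C'}g_{C\cap B}\,dP_{\theta}=c_{B}\,P_{\theta}(C\cap C')=P_{\theta}(B)\,P_{\theta}(C\cap C')=P_{\theta}\bigl(B\cap(C\cap C')\bigr),
$$
the last equality being exactly the $\mathcal{P}$-independence of $\mathcal{C}$ and $\mathcal{B}$ applied to $C\cap C'\in\mathcal{C}$ and $B\in\mathcal{B}$. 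Since this identity holds simultaneously for every $\theta\in\Theta$, the fixed $\mathcal{C}$-measurable function $g_{C\cap B}$ is a common version.

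Now introduce
$\mathcal{D}=\{A\in\mathcal{A}:\text{there exists a }\mathcal{C}\text{-measurable }g_{A}\text{ that is a version of }P_{\theta}(A\mid\mathcal{C})\text{ for every }\theta\}.$
I would verify that $\mathcal{D}$ is a $\lambda$-system: $\mathcal{X}\in\mathcal{D}$ with $g_{\mathcal{X}}\equiv 1$; closure under complementation uses $g_{A^{c}}=1-g_{A}$; and closure under countable disjoint unions follows from countable additivity of conditional probability, since $g_{\bigcup_{n}A_{n}}=\sum_{n}g_{A_{n}}$ is again $\mathcal{C}$-measurable and is a common version. Together with $\mathcal{D}\supseteq\mathcal{E}$ from the previous paragraph, Dynkin's theorem yields $\mathcal{D}\supseteq\sigma(\mathcal{E})=\mathcal{A}$, so $\mathcal{C}$ is sufficient. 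The main point demanding care, and the only real obstacle I anticipate, is the distinction between ``$P_{\theta}(A\mid\mathcal{C})$ is independent of $\theta$ modulo $P_{\theta}$-null sets'' and ``a single $\mathcal{C}$-measurable function represents $P_{\theta}(A\mid\mathcal{C})$ for all $\theta$''; only the latter gives sufficiency in the usual sense, which is precisely why $\mathcal{D}$ is defined in terms of a literal common representative and why the $\lambda$-system verification manipulates fixed functions rather than equivalence classes.
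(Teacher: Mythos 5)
Your proof is correct and takes essentially the same route as the paper's: the heart of both arguments is identifying $P(B)\,\mathbbm{1}_{C}$ as a single $\theta$-free version of $P_{\theta}(C\cap B\mid\mathcal{C})$ via ancillarity plus $\mathcal{P}$-independence, and then extending from the rectangles $C\cap B$ to all of $\mathcal{A}$. The only difference is the choice of extension machinery --- you use a $\pi$-$\lambda$ argument where the paper passes through the algebra of finite disjoint unions and monotone convergence --- and your version is, if anything, slightly more careful about carrying one fixed common representative of the conditional probability through the limit.
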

For the sake of completeness, a short proof of Lemma \ref{lem:KB}  is given in Appendix (the result was proved in Kagan \cite{r19}, Barra \cite{r3} and Bondesson \cite{r6}).

Based on a paragraph in Fisher \cite{r12}, pp. 168, it is likely that Fisher's definition of an ancillary algebra $\mathcal{B}$ required existence of
a $\mathcal{P}$-independent complement $\mathcal{C}$, i.e., that $\displaystyle \sigma(\mathcal{C}, \mathcal{B})=\mathcal{A}$.
If so, he knew that $\mathcal{C}$ is sufficient for $\mathcal{P}$.

Basu's \cite{r4, r5} theorem was useful in characterization of distributions by independence of statistics
(see, e.g., Ferguson \cite{r9, r10}, Klebanov \cite{r29}, Kagan \cite{r24} and an expository paper by Gather \cite{r14}).
Here we want to demonstrate that combining Lemma \ref{lem:KB} with Rao's result \cite{r38} on the structure of UMVUEs proves triviality of
UMVUEs in the models \eqref{eq:Fisher} and \eqref{eq:NS}.
The proof which is purely statistical seems new and is of interest in its own. An analytical method covering curved exponential families with polynomial constraints on the natural parameters was developed in Kagan and Palamodov \cite{r20, r21} and simplified in Unni \cite{r39}.
It is based on a result by Wijsman \cite{r40} on the existence of the first order ancillary statistics for samples from curved exponential
families with polynomial constraints on the natural parameters.

To state Rao's result, recall that if an observation $X\sim P_{\theta}$ with $\theta \in \Theta$ as a parameter, a statistic $g(X)$ with
$\displaystyle E_{\theta}|g(X)|^2<\infty$ is UMVUE iff it is uncorrelated with any unbiased estimator of zero $U(X)$ with $\displaystyle E_{\theta}|U(X)|^2<\infty$,
i.\,e., $\displaystyle E_{\theta}\left\{g(X)U(X)\right\}=0,\,\,\, \theta \in \Theta$
(unbiased estimators of zero are also called zero-mean statistics).
Rao \cite{r38} observed that if a statistic $\displaystyle T=g(X)$ is a UMVUE, then, provided that $\displaystyle E_{\theta}|g(X)U(X)|^2<\infty$,
$T^2$ is also a UMVUE. Proceeding in the same way under the assumption $\displaystyle E_{\theta}|g(X)|^{k}<\infty,\,\,\,\,k=1,2,\ldots$
we observe that any polynomial of $T$ is UMVUE. Assuming moreover that the polynomials of $T$ are complete in $\displaystyle L_{\theta}^2\left(T(X)\right)$,
the Hilbert space of functions $h(T)$ with $\displaystyle E_{\theta}|h(T)|^{2}<\infty$, one gets that any statistic $\displaystyle h(T)$ with $\displaystyle E_{\theta}|h(T)|^{2}<\infty$ is a UMVUE (actually, the UMVUE of $\displaystyle E_{\theta}h(T)$).

In particular, if $S$ is an ancillary statistic, the $\sigma$-algebras $\sigma({T})$ and $\sigma({S})$ are independent for all $\displaystyle \theta \in \Theta$.
If the pair $\displaystyle (T, S)$ determines the sample point $X$ or, equivalently, $\sigma({T},{S})=\sigma(X)=\mathcal{A}$, then $T$ is sufficient for $\theta$ by virtue of Lemma~\ref{lem:KB}.

In the known examples (Lehmann and Scheff\'{e} \cite{r31}, reproduced in Lehmann and Casella \cite{r32}, p. 84, Bondesson \cite{r7}, Kagan and Konikov \cite{r26}), the UMVUEs form a subalgebra $\mathcal{E}\subset \mathcal{A}$ (actually, a subalgebra of the minimal sufficient subalgebra $\mathcal{\widetilde{A}}$) called the $\sigma$-algebra of UMVUEs, i.e., all $\mathcal{E}$-measurable statistics with finite second moments and only they are UMVUEs.

As these examples demonstrate, the problem of describing the algebra of UMVUEs for a general family of distributions seems rather difficult.
In these examples, the minimal sufficient statistic is trivial (i.e., coincides with $\mathcal{A}$), while $\mathcal{E}$ is not.

\section{Statistical Method and the Original Nile Problem}
We shall start with a general result on UMVUEs.
Let $\mathcal{P}=\{P_{\theta}, \theta \in \Theta\}$ be a family of distributions on $(\mathcal{X}, \mathcal{A})$, $\mathcal{C} \subset \mathcal{A}$
an (incomplete) sufficient subalgebra, and $\mathcal{B} \subset \mathcal{C}$ an ancillary subalgebra. In terms of statistics, $X$ represents the data, $T=T(X)$ is an incomplete (minimal) sufficient statistic generating $\mathcal{C}$, and $W=W(T)$ is an ancillary statistic.

We present a new method for obtaining a strong necessary condition for a statistic to be UMVUE.

Here are the conditions imposed on a statistic $g=g(T)$.\\
Condition 1.\\
\begin{equation}
E_{\theta}|g(T)|^{k}<\infty, k=1,2,\ldots,\,\,\,\theta \in \Theta.
\end{equation}
Condition 2.\\
\begin{equation}
\text{span}_{\theta}\{1, g, g^2,\ldots\}=L^{2}_{\theta}(g),
\end{equation}
where $\displaystyle L^{2}_{\theta}(g)$ denotes the Hilbert space of function $h\left(g(T)\right)$ with the inner product
$\displaystyle \left(h_1, h_2\right)_{\theta}=E_{\theta}\left(h_{1}h_{2}\right)$ and $\displaystyle \text{span}_{\theta}\{1, g, g^2,\ldots\}$ is the closure
in $\displaystyle L^{2}_{\theta}(g)$ of the sums $\displaystyle \sum_{j}c_{j}g^{j}$.\\
Condition 3a.\\
\begin{equation}
\sigma\left(g(T)\right)\neq\sigma\left(T\right)\,\,\,\text{(strict\,\,\,inclusion)}.
\end{equation}
Condition 3b.\\
\begin{equation}
\sigma\left(g(T),W\right)=\sigma\left(T\right).
\end{equation}
Conditions 3a and 3b refer to the $\sigma$-algebras, one generated by $\displaystyle g\left(T\right)$
and the other by ancillary statistic $\displaystyle W$.
Roughly speaking, Condition 3a means that $g(T)$ is not a one-to-one function, while the pair $\left(g\left(T\right), W(T)\right)$ is.

In real setups, an incomplete $T$ is multidimensional, while $g(T)$ is a scalar valued statistic.

Let $\displaystyle Q_{\theta}(u)=P_{\theta}\left\{g\left(T\right)\leq u\right\}$ be the distribution of $\displaystyle g\left(T\right)$. If for all $\theta \in \Theta$, $\displaystyle Q_{\theta}(u)$ is continuous in $u$ and the moment problem for $\displaystyle Q_{\theta}$ is determinate, i.e.,
$\displaystyle Q_{\theta}$ is the only distribution with the moments
$$\displaystyle \alpha_{m}(\theta)=\int u^{m} dQ_{\theta}(u)=E_{\theta}\left\{g^{m}\left(T\right)\right\},\,\,\,m=0,1,2,\ldots$$
then the polynomials in $u$ are dense in $\displaystyle L^{2}(Q_{\theta})$ or, equivalently, polynomials in $g(T)$ are dense in $\displaystyle L^{2}_{\theta}(g)$.
A sufficient condition for this is given by the Carleman's classical criterion: $\sum_{m=1}^{\infty}\left(\alpha_{2m}(\theta)\right)^{-1/(2m)}=\infty.$
See Corollary 2.3.3 in Akhiezer \cite{r1}, p. 45.

\begin{thm}
\label{th:1}
A statistic $\displaystyle g\left(T\right)$ satisfying Conditions 1,\,2,\,3a,\,3b cannot be a UMVUE.
\end{thm}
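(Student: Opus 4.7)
The plan is a proof by contradiction. Suppose $g(T)$ is a UMVUE; I will show that $\sigma(g(T))$ is then sufficient for $\mathcal{P}$, which contradicts Condition 3a combined with the minimality of $T$. The argument proceeds in three stages.

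First, I propagate the UMVUE property through the whole of $L^{2}_{\theta}(g)$. By Rao's characterization, $g(T)$ is a UMVUE iff $E_{\theta}[g(T)U]=0$ for every zero-mean $U\in L^{2}_{\theta}$. Given such $U$, the product $g(T)U$ is itself zero-mean, and Condition 1 together with Cauchy-Schwarz (applied first to bounded truncations of $U$ and then via an $L^{2}$ limit) keeps $g(T)U$ inside $L^{2}_{\theta}$, so $E_{\theta}[g^{2}(T)U]=E_{\theta}[g(T)\cdot g(T)U]=0$. Iterating, every $g^{k}(T)$ is a UMVUE. Because uncorrelatedness with a fixed $U\in L^{2}_{\theta}$ is preserved under $L^{2}_{\theta}$-convergence, Condition 2 (polynomial density) now yields that every $h(g(T))\in L^{2}_{\theta}(g)$ is a UMVUE; in particular, so is $\mathbbm{1}_{A}$ for every $A\in\sigma(g(T))$.

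Second, I upgrade this to $\mathcal{P}$-independence of $\sigma(g(T))$ and $\sigma(W)$. Since $W$ is ancillary, for each $B\in\sigma(W)$ the number $P_{\theta}(B)$ does not depend on $\theta$, so $U_{B}=\mathbbm{1}_{B}-P_{\theta}(B)$ is a bounded zero-mean statistic under every $P_{\theta}$. Testing the UMVUE $\mathbbm{1}_{A}$ against $U_{B}$ gives $P_{\theta}(A\cap B)=P_{\theta}(A)\,P_{\theta}(B)$ for all $A\in\sigma(g(T))$, $B\in\sigma(W)$, $\theta\in\Theta$. Third, I apply Lemma \ref{lem:KB} to the family $\mathcal{P}$ restricted to $\sigma(T)$, with $\mathcal{C}=\sigma(g(T))$ and $\mathcal{B}=\sigma(W)$: Condition 3b gives $\sigma(\mathcal{C},\mathcal{B})=\sigma(T)$, $\sigma(W)$ is ancillary, and stage two provides the required independence, so Lemma \ref{lem:KB} declares $\sigma(g(T))$ sufficient within $\sigma(T)$. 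A standard tower-property argument (using that $\sigma(T)$ is already sufficient for $\mathcal{P}$) promotes this to sufficiency of $\sigma(g(T))$ for $\mathcal{P}$ on all of $(\mathcal{X},\mathcal{A})$. Since $T$ is minimal sufficient and $\sigma(g(T))\subsetneq\sigma(T)$ by Condition 3a, this is the sought contradiction.

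The step I expect to require the most care is the integrability bookkeeping in stage one: the inductive passage from $g^{k}$ to $g^{k+1}$ is clean only when $g^{k+1}U$ remains in $L^{2}_{\theta}$ for arbitrary $U\in L^{2}_{\theta}$, which Condition 1 does not guarantee directly. The standard workaround is to verify the UMVUE identity first for bounded $U$ (where Cauchy-Schwarz against the moment bounds of Condition 1 suffices) and then extend by truncation and $L^{2}_{\theta}$-density; Condition 2 bundles all of these partial steps into the single clean assertion that every $h\in L^{2}_{\theta}(g)$ is a UMVUE, which is all that is actually used in stages two and three.
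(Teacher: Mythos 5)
Your proposal is correct and follows essentially the same route as the paper's proof: Rao's iteration to make every power of $g$ (hence, via Condition 2, every element of $L^{2}_{\theta}(g)$) uncorrelated with zero-mean statistics, deduction of the $\mathcal{P}$-independence of $\sigma(g(T))$ and $\sigma(W)$, and then Lemma \ref{lem:KB} with Condition 3b to force sufficiency of $\sigma(g(T))$, contradicting Condition 3a and the minimality of $T$. The integrability issue you flag is handled in the paper exactly as you suggest, by running the induction with bounded unbiased estimators of zero, which is all that is needed to test against $\mathbbm{1}_{B}-P(B)$ for $B\in\sigma(W)$.
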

\begin{proof}
Suppose that $\displaystyle g=g\left(T\right)$ is a UMVUE.
Then for any zero-mean statistic $\displaystyle U=U\left(T\right)$
with $\displaystyle E_{\theta}|U|^{2}<\infty$ one has
\begin{equation}
\label{eq:U}
E_{\theta}\left(gU\right)=0, \,\,\,\,\theta \in \Theta.
\end{equation}
In particular, since $\displaystyle W$ is an ancillary statistic, \eqref{eq:U} implies
\begin{equation}
\label{eq:W}
E_{\theta}\left(g\,|\,W\right)=E_{\theta}(g), \,\,\,\,\theta \in \Theta.
\end{equation}
Turn now to $g^2$. For any bounded non-zero statistic $U,$ the statistic $\displaystyle U_{1}=gU$ is, by virtue of \eqref{eq:U} and Condition 1
also zero-mean statistic with finite second moment, $\displaystyle E_{\theta}(U_{1})=0, \,\,\,E_{\theta}(|U_{1}|^2)<\infty,\,\,\,\theta \in \Theta.$
Thus, from $\displaystyle g=g\left(T\right)$ being a UMVUE, follows that
$\displaystyle E_{\theta}\left(gU_{1}\right)=E_{\theta}\left(g^2U\right)=0,\,\,\theta \in \Theta,$ implying
\begin{equation}
\label{eq:WW}
E_{\theta}\left(g^2\,|\,W\right)=E_{\theta}(g^2), \,\,\,\,\theta \in \Theta.
\end{equation}
Proceeding in the same way, one can prove that
\begin{equation}
\label{eq:WW}
E_{\theta}\left(g^k\,|\,W\right)=E_{\theta}(g^k),\,\,\,k=1,2,3\ldots; \,\,\,\,\theta \in \Theta.
\end{equation}
Notice again that the above arguments are essentially due to Rao \cite{r38}.
Let now $\displaystyle h=h\left(T\right)\in L^{2}_{\theta}(g)$. Take a sequence of the finite sums
$\displaystyle \sum_{k=1}^{m}c_{k,m}\,g^{k}$ with
\begin{equation*}
E_{\theta}\left(\sum_{k=1}^{m}c_{k,m}\,g^{k}-h\right)^2\rightarrow 0,\,\,\,\text{as}\,\,\,\,m\rightarrow \infty.
\end{equation*}
Such sequence exists due to Condition 2.
Since $$\displaystyle \left|E_{\theta}\left(\sum_{k=1}^{m}c_{k,m}\,g^{k}-h\right)\right|^2\leq E_{\theta}\left(\left|\sum_{k=1}^{m}c_{k,m}\,g^{k}-h\right|^2\right)$$
one has
$
E_{\theta}\left(\sum_{k=1}^{m}c_{k,m}\,g^{k}\right)\rightarrow E_{\theta}\left(h\right),\,\,\,\text{as}\,\,\,\,m\rightarrow \infty.
$
Furthermore,
\begin{align*}
&
E_{\theta}\left\{E_{\theta}\left(\sum_{k=1}^{m}c_{k,m}\,g^{k}-h\,|\,W\right)\right\}^2=
E_{\theta}\left\{E_{\theta}(\sum_{k=1}^{m}c_{k,m}\,g^{k}\,|\,W)-E_{\theta}(h\,|\,W)\right\}^2\\
&
=
E_{\theta}\left\{E_{\theta}(\sum_{k=1}^{m}c_{k,m}\,g^{k})-E_{\theta}(h\,|\,W)\right\}^2
\leq
E_{\theta}\left(\sum_{k=1}^{m}c_{k,m}\,g^{k}-h\right)^2\rightarrow 0,
\end{align*}
as $m\rightarrow \infty$.
Hence, $\displaystyle E_{\theta}\left(h\,|\,W\right)=E_{\theta}\left(h\right).$
Therefore, any $\displaystyle h\in L^{2}_{\theta}(g)$ has a constant conditional expectation on $\displaystyle \sigma(W)$, i. e.,
$\sigma\left(g\left(T\right)\right)$ and $\displaystyle \sigma(W)$ are independent for any $\theta \in \Theta$.

By virtue of Lemma \ref{lem:KB} and Condition 3b, the statistic $\displaystyle g\left(T\right)$ (or, equivalently,
$\sigma$-algebra $\sigma\left(g\left(T\right)\right)$) is sufficient for $\theta$. Actually, $\sigma\left(g\left(T\right)\right)$ is complete sufficient for $\theta$. Indeed, if $E_{\theta}\left\{U\left(g(T)\right)\right\}=0$ identically in $\theta$, then $U$ is an unbiased estimator of zero. As a function of $g(T)$, $U$ is a UMVUE. Plainly, the UMVUE of zero is zero, so that $P_{\theta}\left(U=0\right)=1$ for all $\theta \in \Theta$. But due to Condition 3a, $\sigma\left(g\left(T\right)\right)$ is a proper subalgebra of the minimal sufficient $\sigma$-algebra $\sigma\left(T\right)$, which is a contradiction.
Notice in conclusion that the trivial UMVUE's, $g\left(T\right)=\text{const}$, do not satisfy Condition 3b.
\end{proof}
Let now $\displaystyle \left(X_1, Y_1\right),\ldots,\left(X_n, Y_n\right)$ be a sample from
$$f(x,y;\theta)=e^{-\left(x \theta+y/\theta\right)},\,\,\, x>0, y>0,$$
with $\displaystyle \theta>0$ as a parameter. The inference from the above sample is what is usually referred to as the Nile problem by Ronald Fisher.
Plainly, the vector $\displaystyle T=(\overline{X}, \overline{Y})$ is the minimal sufficient statistic for $\displaystyle \theta$ and $\displaystyle W=\overline{X}\,\overline{Y}$
is an ancillary statistic. The minimal sufficient statistics is incomplete and this makes the problem of existence and description of UMVUEs nontrivial.

Applied to the Nile problem, Theorem \ref{th:1} proves
that a statistic satisfying rather general ``regularity type" conditions (Conditions 1,\,2,\,3a,\,b) is { not} a UMVUE (the existence of a nonconstant
UMVUE is an open problem, according to Nayak and Sinha \cite{r35}).

Turn now to a natural class of estimators of $\theta$.
A statistic $\displaystyle \widetilde{\theta}\left(\overline{X}, \overline{Y} \right)$ is called an equivariant estimator of $\theta$ if
\begin{equation}
\label{eq:Inv}
\widetilde{\theta}\left(\overline{X}/\lambda, \overline{Y}\lambda \right)=\lambda\widetilde{\theta}\left(\overline{X}, \overline{Y}\right)\,\,\, \text{for any}\,\,\, \lambda>0.
\end{equation}
The equivariant estimators in the Nile problem were studied in Kariya \cite{r28}.
Plainly, an equivariant estimator can be written as
\begin{equation}
\label{eq:Equiv}
\widetilde{\theta}=\overline{Y}h(W)
\end{equation}
for some $h$. Here $\overline{Y}$ here may be replaced with any statistic of degree of homogeneity one in sense of \eqref{eq:Inv},
e. g., $\displaystyle \sqrt{\overline{Y}/\overline{X}}$ (the latter is the maximum likelihood estimator (MLE) of $\theta$, as noticed by Fisher himself), or $\displaystyle {1/\overline{X}}$ .
If \eqref{eq:Equiv} is a UMVUE, then \eqref{eq:WW} results in
\begin{equation}
\label{eq:h}
h(W)=\frac{1}{E_{1}\left(\overline{Y}\,|\,W\right)},
\end{equation}
where $E_{1}$ is the expectation taken when $\theta=1$ and
\begin{equation}
\label{eq:form}
E_{1}\left(\overline{Y}\,|\,W=w\right)=\frac{\int_{0}^{\infty}\frac{1}{z^2}e^{-n\left(\frac{1}{z}+wz\right)}dz}
{\int_{0}^{\infty}\frac{1}{z}e^{-n\left(\frac{1}{z}+wz\right)}dz}.
\end{equation}
If $\displaystyle \widetilde{\theta}$ is a UMVUE, so is $\displaystyle \widetilde{\theta}^{\,\,2}$ and thus
$\displaystyle E\left({\widetilde{\theta}}^{\,\,2}\,|\,W\right)=E\left({\widetilde{\theta}}^{\,\,2}\right)$, but\\
$\displaystyle E\left({\widetilde{\theta}}^{\,\,2}\,|\,W\right)=h^2(W)E\left(\overline{Y}^{\,2}\,|\,W\right)=\theta^2
\frac{{E_{1}\left(\overline{Y}^{\,2}\,|\,W\right)}}{\left({E_{1}\left(\overline{Y}\,|\,W\right)}\right)^2}$ and straightforward calculations using
\eqref{eq:form} show that $\displaystyle E\left({\widetilde{\theta}}^{\,\,2}\,|\,W\right)$ depends on $W$ . But this is in contradiction with necessary condition \eqref{eq:WW} for being a UMVUE. Thus, \eqref{eq:Equiv}
is not a UMVUE.
Similar arguments show that no estimator of the form $\displaystyle \overline{Y}^{\,k}h(W)$ or $\displaystyle \overline{X}^{\,k}h(W)$
is a UMVUE.

\section{Problems Closely Related to the Nile Problem}
The following setup of direct measurements, being of an interest in its own, has the same basic features as the Nile problem:
an incomplete minimal sufficient statistic and an ancillary statistic which is a function of the sufficient one.

Let $\displaystyle \left(X_1,X_2,\ldots,X_n\right)$ be a sample from a normal population $\displaystyle N(\theta, c^2\theta^2)$ with $\theta,\, \theta>0$ as a parameter, and $c>0$ is known.
In other words,
$$X_i=\theta+\varepsilon_i,\,\,\,i=1,\ldots,n,$$
where $\displaystyle \varepsilon_i,\ldots,\varepsilon_n$ are independent random variables distributed as $\displaystyle N(0, c^2\theta^2)$.
In the standard setups of direct measurements, the distribution function $F(x)$ of $\varepsilon_i$ (not necessarily normal) is assumed independent of $\theta$
so that $\displaystyle \left(X_1,X_2,\ldots,X_n\right)$ is a sample from $F(x-\theta)$ with a location parameter $\theta$. Estimation
of a location parameter in small samples was originated in Pitman \cite{r36}. Since then it has been thoroughly studied, especially for the quadratic loss function;
see, e. g., monographs Lehmann and Casella \cite{r32}, Casella and Berger \cite{r8}, Kagan \cite{r23}, Zacks \cite{r41} and recent papers Kagan and Rao \cite{r25}, Kagan {\it et al} \cite{r27} and references therein.
A special role of normal distribution $\displaystyle \Phi(x)$ in estimation of a location parameter is due to the fact that in the class of the distributions $F$ with a given
variance $\sigma^2$, the Fisher information on $\theta$ contained in an observation $X_i \sim F(x-\theta)$ is minimized at $F(x)=\Phi(x/\sigma)$, with
the minimum equals $\displaystyle 1/\sigma^2$. A closely related result is that under the quadratic loss function and $n\geq3$, $\displaystyle \overline{X}$ is an admissible estimator of
$\theta$ if and only if $F=\Phi$. The if part is due to Hodges and Lehmann \cite{r16} and only if part due to Kagan {\it et al}. \cite{r18}.

The setup with $\displaystyle \left(X_1,X_2,\ldots,X_n\right)$ being a sample from $\displaystyle N(\theta, c^2\theta^2)$ differs significantly from
that with $\displaystyle \left(X_1,X_2,\ldots,X_n\right)$ taken from $\displaystyle N(\theta, \sigma^2)$ with $\sigma^2$ independent of $\theta$.
Firstly, the Fisher information on $\theta$ in $X_i\sim N(\theta, c^2\theta^2)$ equals $\displaystyle \frac{1}{\theta^2}\left(2+\frac{1}{c^2}\right)$,
and it exceeds the information in $\displaystyle X_i \sim N(\theta, \sigma^2)$ calculated at $\sigma^2=c^2\theta^2$ which equals $\displaystyle \frac{1}{c^2\theta^2}$.
Secondly, the minimal sufficient statistic for a sample from $\displaystyle N(\theta, c^2\theta^2)$, is the pair $\displaystyle (\overline{X}, S^2)$ of the sample mean and variance, and it is incomplete, while from a sample from $\displaystyle N(\theta, \sigma^2)$ with known $\sigma^2$ it is  $\displaystyle \overline{X}$ and it is complete
and for sample from $\displaystyle  N(\theta, \sigma^2)$ with $\displaystyle (\theta, \sigma^2)\in (\mathbb{R}\times\mathbb{R}_{+} )$, the pair $\displaystyle (\overline{X}, S^2)$ is a complete sufficient statistic.

The setup of small and large samples from $\displaystyle N(\theta, c^2\theta^2)$ was studied in a number of papers.
Khan \cite{r30} found the best unbiased estimator of $\theta$ in the class of estimators linear in $\displaystyle \overline{X}, S$  and showed that it is asymptotically efficient.
Gleser and Healy \cite{r15} proved admissibility of the best (scale)-equivariant estimator of $\theta$. Since it is different from the (also equivariant)
MLE, the latter is inadmissible. See also Hinkley \cite{r17} and Kariya \cite{r28} for related results.
According to Nayak and Sinha \cite{r35}, the problem of existence of UMVUEs is open.

Since in samples from a normal population $\displaystyle \overline{X}$ and $\displaystyle S$ are independent, the setup of sampling from
$\displaystyle N(\theta, c^2\theta^2)$ is very similar to the Nile problem:  $T=\displaystyle (\overline{X}, S^2)$ is an incomplete sufficient statistic and
$\displaystyle W=\frac{\overline{X}}{S}$ is an ancillary statistic. To show the latter, write
\begin{align*}
&\frac{\overline{X}}{S}=\frac{(\overline{X}-\theta+\theta)/\theta}{S/\theta}=\frac{\frac{\overline{X}-\theta}{\theta}+1}{S/\theta}
\end{align*}
and notice that the distributions of $\displaystyle \frac{\overline{X}-\theta}{\theta}$ and $\displaystyle \frac{S}{\theta}$ do not depend on $\displaystyle \theta$.
A direct application of the method used in proving Theorem \ref{th:1} proves the following result.
\begin{thm}
\label{th:2}
Let $\displaystyle g\left(T\right)$ be a statistic satisfying Conditions 1,\,2,\,3a,\,3b with $T=\displaystyle (\overline{X}, S)$ is an incomplete sufficient statistic and
$\displaystyle W=\frac{\overline{X}}{S}$ is an ancillary statistic. Then $\displaystyle g\left(T\right)$ is not a UMVUE.
\end{thm}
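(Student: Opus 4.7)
The approach is to observe that Theorem~\ref{th:2} is a direct corollary of Theorem~\ref{th:1} applied to the family $\mathcal{P} = \{N(\theta, c^2\theta^2)^{\otimes n} : \theta > 0\}$. The plan is to verify the three structural ingredients required by Theorem~\ref{th:1} -- an incomplete (minimal) sufficient subalgebra $\mathcal{C} = \sigma(T)$, an ancillary subalgebra $\mathcal{B} = \sigma(W) \subset \mathcal{C}$, and a candidate statistic $g(T)$ obeying Conditions 1, 2, 3a, 3b -- and then to rerun the proof of Theorem~\ref{th:1} verbatim.

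First I would check the sufficient-statistic setup. Writing the joint density in curved exponential family form, the canonical statistic is $(\sum_i X_i, \sum_i X_i^2)$, which is in one-to-one correspondence with $T = (\overline{X}, S)$, so $T$ is minimal sufficient. Incompleteness of $T$ and ancillarity of $W = \overline{X}/S$ are precisely what the excerpt records: the pivotal decomposition
$$\frac{\overline{X}}{S} = \frac{(\overline{X}-\theta)/\theta + 1}{S/\theta}$$
shows $W$ is distribution-free since both $(\overline{X}-\theta)/\theta$ and $S/\theta$ are pivots. Because $W$ is manifestly a function of $T$, we have $\sigma(W) \subset \sigma(T)$, placing us squarely in the framework of Theorem~\ref{th:1}.

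Next I would replay the argument of Theorem~\ref{th:1}. Assuming toward contradiction that $g(T)$ is a UMVUE, Rao's iterative argument (applied to bounded zero-mean statistics and the finite-moment hypothesis of Condition~1) yields $E_\theta(g^k \mid W) = E_\theta(g^k)$ for every $k \ge 1$ and every $\theta > 0$; Condition~2 extends this, via $L^2_\theta(g)$-approximation by polynomials in $g$, to $E_\theta(h \mid W) = E_\theta(h)$ for every $h \in L^2_\theta(g)$, meaning $\sigma(g(T))$ is $\mathcal{P}$-independent of $\sigma(W)$. Condition~3b, together with Lemma~\ref{lem:KB}, then forces $\sigma(g(T))$ to be sufficient; it is moreover complete, since any $\sigma(g(T))$-measurable unbiased estimator of zero is itself a UMVUE of zero and so vanishes almost surely. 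But Condition~3a places $\sigma(g(T))$ strictly inside the minimal sufficient $\sigma$-algebra $\sigma(T)$, which is incompatible with its being sufficient, giving the contradiction.

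I do not foresee any serious obstacle: the machinery of Theorem~\ref{th:1} uses no specific feature of the underlying family beyond the three ingredients verified in the second paragraph, and the present model supplies each of them. The only points that warrant care are confirming that $(\overline{X}, S)$ is genuinely the minimal sufficient $\sigma$-algebra (so that no proper sub-$\sigma$-algebra of $\sigma(T)$ can be sufficient) and that $W$ is $\sigma(T)$-measurable; both are immediate.
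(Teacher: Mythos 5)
Your proposal is correct and matches the paper exactly: the paper itself proves Theorem~\ref{th:2} by noting it is ``a direct application of the method used in proving Theorem~\ref{th:1}'' to the setting $T=(\overline{X},S)$, $W=\overline{X}/S$, after verifying the same pivotal decomposition you cite for the ancillarity of $W$. Your additional care in confirming minimal sufficiency of $T$ and $\sigma(T)$-measurability of $W$ is consistent with, and slightly more explicit than, the paper's treatment.
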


\section{Analytical Method}
We shall show now that Theorem \ref{th:2} holds true without (unnecessary) Conditions 1,\,2,\,3a,\,3b due to the fact that the family of normal distributions  $\displaystyle N(\theta, c^2\theta^2)$ with $\displaystyle \theta$ as a parameter is a curved exponential family with polynomial constraints on the natural parameter. It is straightforward corollary of Lemma \ref{lem:W} below whose proof is purely analytical. The idea of the proof goes back to Wijsman \cite{r40}.
As one can easily see, the probability density function of the minimal sufficient statistic (based on the sample of size $n$) $\displaystyle \left(\sum_{i=1}^{n}X_{i}, \sum_{i=1}^{n}X_{i}^{2}\right)$ at the point $\displaystyle (u, v)$ is
$$f(u,v;\,\eta_1, \eta_2)=h(u,v)e^{\eta_1 u+\eta_2 v-\psi(\eta_1,\eta_2)},\,\,\,u\in \mathbb{R},\,v \in \mathbb{R}_{+},$$
where the explicit form of $\displaystyle h(u,v)$ does not matter, but what matters for our purpose is a constraint $\displaystyle \eta_1^{2}+\frac{2}{c^2}\eta_2=0$ on the natural parameters
$\displaystyle \eta_1=\frac{1}{c^2\theta},\,\,\eta_2=-\frac{1}{2c^2\theta^2}$. The structure of UMVUEs for samples from natural exponential families (NEFs) with polynomial constraints on the parameters was studied in Kagan and Palamodov \cite{r20, r21} and Unni \cite{r39} where the following result was proved.
\begin{lem}
\label{lem:W}
Let the distribution of the vector of sufficient statistics
$T=\left(T_1,\ldots,\right.$\\$\left.T_s\right)$ is given by a density
$$f(t_1,\ldots,t_s;\,\theta_1,\ldots,\theta_s)=
h(t_1,\ldots,t_s)e^{\theta_1t_1+\ldots+\theta_s t_s-\psi(\theta_1,\ldots,\theta_s)}$$
with the parameter set being the intersection $\displaystyle \Xi\cap\Pi$
where $\displaystyle \Xi$ is an open set in $\displaystyle \mathbb{R}^{s}$ and $\displaystyle \Pi$ the algebraic manifold defined by polynomial constraints
\begin{equation}
\label{eq:Const}
\Pi_1\left(\theta_1,\ldots,\theta_s\right)=0,\ldots, \Pi_{m}\left(\theta_1,\ldots,\theta_s\right)=0.
\end{equation}
A statistic $\displaystyle Q(T)$ is a UMVUE if and only if there exists a linear reparametrization
\begin{align}
\label{eq:kp}
&\theta_1=a_{11}\theta_1^{'}+\ldots+a_{1s}\theta_s^{'}\nonumber \\
&\ldots\\
&\theta_s=a_{s1}\theta_1^{'}+\ldots+a_{ss}\theta_s^{'}\nonumber
\end{align}
such that the constraints \eqref{eq:Const} involve only $\displaystyle \theta_1^{'},\ldots,\theta_m^{'},\,m\leq s$ and the remaining components
$\displaystyle \theta_{m+1}^{'},\ldots,\theta_s^{'},\,m\leq s$ run an open set in $\mathbb{R}^{s-m}$ in which case $Q$ depends only on
$$\displaystyle T_{m+1}=a_{1,m+1}T_1+\ldots+a_{s,m+1}T_s,\ldots,T_{s}=a_{1,s}T_1+\ldots+a_{s,s}T_s.$$
\end{lem}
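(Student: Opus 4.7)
The plan is to prove the ``if'' and ``only if'' directions separately using the covariance characterization: $Q(T)$ is a UMVUE of its own mean if and only if $E_\theta[Q(T)U(T)] = 0$ for every unbiased estimator of zero $U(T)$. An invertible linear reparametrization $\theta = A\theta'$ of the natural parameters induces the dual linear change of statistics $T' = A^{\top}T$ and preserves the exponential-family form while carrying the polynomial constraints \eqref{eq:Const} to polynomial constraints in $\theta'$, so I may work in whichever linear coordinates are convenient.

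\emph{Sufficiency.} Assume the reparametrization in the statement is already in force, split $\theta' = (\alpha,\beta)\in\mathbb{R}^{m}\times\mathbb{R}^{s-m}$ so that $\alpha$ obeys the transformed constraints and $\beta$ ranges over an open set $\Omega\subset\mathbb{R}^{s-m}$, and correspondingly split $t' = (u,v)$. For any unbiased estimator of zero $U(T')$ and any admissible $\alpha$, the identity $E_{(\alpha,\beta)}U\equiv 0$ is, viewed as a function of $\beta\in\Omega$, the vanishing on an open set of a Laplace transform in $\beta$; injectivity of the Laplace transform yields
\[
V(v;\alpha) := \int U(u,v)\, h'(u,v)\, \exp(\alpha\cdot u)\, du = 0\quad\text{for a.e.\ } v.
\]
If $Q$ depends only on $v$, then
\[
E_{(\alpha,\beta)}[Q(T')U(T')] = e^{-\psi'(\alpha,\beta)}\int Q(v)\, V(v;\alpha)\, e^{\beta\cdot v}\, dv = 0,
\]
so $Q$ is orthogonal in $L^{2}_{\theta}$ to every unbiased estimator of zero and is therefore a UMVUE.

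\emph{Necessity.} This is the substantive direction and follows Wijsman~\cite{r40}. The key algebraic input is the differential-operator construction in an NEF: every polynomial $p(\theta_1,\ldots,\theta_s)$ is matched with a polynomial-in-$T$ statistic $p^{*}(T)$ obeying $E_\theta p^{*}(T) = p(\theta)$, and conversely every polynomial unbiased estimator of zero arises this way from some $p$ in the ideal $I(\Pi)$ of the constraint variety. A density argument shows these polynomial estimators are $L^2_\theta$-dense in the space of all unbiased estimators of zero. The UMVUE condition $E_\theta[Q\, p^{*}(T)] = 0$ for every $p\in I(\Pi)$ then translates, via integration by parts against the exponential kernel, into a system of differential annihilation relations satisfied by the moment generating function of $Q$, one for each generator of the ideal.

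\emph{Extraction of the linear structure.} Analysis of those annihilation relations shows that the transform of $Q$ is invariant along every direction tangent to $\Pi$ at a regular point; the tangent directions at regular points span a linear subspace $L\subset\mathbb{R}^s$ of dimension $s-m$. Choosing $A$ so that the last $s-m$ columns span $L$ and the first $m$ span a complement realigns the constraints with the first $m$ new parameters and forces $Q$ to depend only on the transformed statistics $T_{m+1},\ldots,T_{s}$ built from $L$. The principal obstacle throughout is this last step: controlling the $L^2_\theta$-span of the polynomial unbiased estimators of zero tightly enough that polynomial orthogonality relations on $Q$ sharpen into a purely linear condition on the directions of $T$ on which $Q$ may depend. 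This is precisely where the polynomial (rather than merely analytic) nature of the constraints is indispensable and where the detailed work in Kagan and Palamodov~\cite{r20, r21} and Unni~\cite{r39} is carried out.
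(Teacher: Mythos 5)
The paper offers no proof of Lemma \ref{lem:W} at all --- its ``proof'' is the single line ``See Kagan and Palamodov \cite{r20, r21} and Unni \cite{r39}'' --- so there is no argument in the text to compare yours against. Your sufficiency half is correct and is the standard route: after the reparametrization, injectivity of the Laplace transform on the open set swept by $\beta$ shows that $V=(T_{m+1},\ldots,T_s)$ is complete sufficient for $\beta$ at each fixed admissible $\alpha$, whence any square-integrable function of $V$ is orthogonal to every unbiased estimator of zero. This is exactly Bondesson's observation that the paper quotes immediately after the lemma, and your Fubini/Laplace computation is a clean way to write it (modulo routine integrability checks).

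The necessity half, however, is an outline rather than a proof, and the gap sits precisely where you locate it. That a UMVUE is orthogonal to the polynomial unbiased estimators of zero built from the ideal $I(\Pi)$ via the differential-operator correspondence is immediate (no density argument is needed for this direction --- orthogonality to \emph{all} unbiased estimators of zero is the hypothesis, so orthogonality to the polynomial ones is free; your appeal to $L^2_\theta$-density is misplaced here and would only be needed for the converse implication). What is \emph{not} immediate is the ``extraction of the linear structure'': that these finitely many annihilation relations force the existence of a single linear subspace $L$ of dimension $s-m$ such that $Q$ depends only on the $L$-components of $T$ and the constraints, after the dual change of coordinates, involve only the complementary parameters. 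The tangent spaces to $\Pi$ at different regular points need not coincide, and showing that a nonconstant $Q$ can survive the annihilation relations only when they do (i.e., only when $\Pi\cap\Xi$ is an affine slice) is the entire content of the Kagan--Palamodov/Unni analysis. You assert this step and explicitly defer it to the references, which is honest but leaves the ``only if'' direction unproven. As submitted, the proposal establishes sufficiency and correctly identifies, but does not close, the decisive step of necessity.
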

\begin{proof}
See Kagan and Palamodov \cite{r20, r21} and Unni \cite{r39}.
\end{proof}
The sufficiency part simply means that under the new parametrization, $\displaystyle \left(T_{m+1},\right.$\\
$\left.\ldots, T_{s}\right)$ is complete sufficient statistic for
$\displaystyle \left(\theta_{m+1}^{'},\ldots, \theta_s^{'}\right)$ for any fixed values of $\displaystyle \theta_{1}^{'},\ldots, \theta_{m}^{'}$.
Bondesson \cite{r7} noticed that if $\mathcal{P}=\{P_{\theta,\,\eta}\}$ is a family of distributions of a random element $X \in (\mathcal{X}, \mathcal{A})$
parameterized by a ``bivariate" parameter $(\theta,\,\eta)$, and a statistic $T=T(X),\, T:(\mathcal{X}, \mathcal{A})\rightarrow (\mathcal{T}, \mathcal{B})$
is complete sufficient for $\theta$ for any fixed value of $\eta$, then $\sigma(T)$ is an algebra of UMVUEs.

Lemma \ref{lem:W} provides an analytical proof of non-existence of nontrivial UMVUEs from the samples from populations \eqref{eq:Fisher}, \eqref{eq:BGauss} and \eqref{eq:NS}.
All three densities are from exponential families with polynomial constraints on the natural parameters,
$\displaystyle \eta_1=-\theta,\,\,\eta_2=-\frac{1}{\theta}$\, with\,  $\eta_1\eta_2-1=0$ in case of \eqref{eq:Fisher}, $\displaystyle \eta_1=-\frac{1}{2(1-\rho^2)},\,\,\eta_2=\frac{\rho}{1-\rho^2}$\, with\, $2\eta_1-\eta_2^2+4\eta_1^2=0$ in case of \eqref{eq:BGauss}, and
$\displaystyle \eta_1=-\frac{1}{2c^2\theta^2},\,\,\eta_2=\frac{1}{c^2\theta}$ \,with\, $\eta_1+c^2/2\eta_2^2=0$ in case \eqref{eq:NS}.
One can see that reparametrization \eqref{eq:kp} does not exist in all these cases.

\section{Some Applications of the Statistical Method}
The powerful analytical method originated in Wijsman \cite{r40} and developed in Kagan and Palamodov \cite{r20, r21} is applicable to exponential families with polynomial constraints on the canonical parameters (sometimes referred to as curved exponential families). In this section examples of non-exponential families are presented where nonexistence of UMVUEs is proved by the statistical method. 
\\
\noindent
{\bf Example 3}.\\
Let $\displaystyle \left(X_1,\ldots,X_n\right)$ be a sample from a uniform distribution $U\left(\theta-1,\right.$
$\left.\theta+1\right)$ (plainly non-exponential) with $\theta\in \mathbb{R}$ as a parameter. The pair $\left(X_{(1)}, X_{(n)}\right)$ of the minimum and maximum of the sample elements is an (incomplete) minimal sufficient statistic for the parameter $\theta$, while the range $W=X_{(n)}-X_{(1)}$ is an ancillary statistic. Were $g\left(X_{(1)}, X_{(n)}\right)$ a UMVUE and $\sigma\left(g, W\right)=\sigma\left(X_{(1)}, X_{(n)}\right)$, $g$ would have been a complete sufficient statistic for $\theta$ contradicting the minimality of the (bivariate) sufficient statistic
$T=\left(X_{(1)}, X_{(n)}\right)$.
In particular, the Pitman estimator of $\theta$ under the quadratic loss is  $\left(X_{(1)}+X_{(n)}\right)/2$ and though the best in the class of equivariant estimators, it is not UMVUE.

In the similar way one can prove the non-existence of UMVUEs from a sample from  $U\left(\theta, \lambda \theta\right)$ with $\lambda>1$ known, $\theta>0$ as a parameter.
\\
\noindent
{\bf Example 4}.\\
Let $\displaystyle \left(X_1,\ldots,X_n\right)$ be a sample from an arbitrary (not necessarily exponential) location parameter family $F(x-\theta)$ with a known (arbitrary) $F(x)$. Let $t_{n}=t_{n}\left(X_1,\ldots,X_n\right)$ be the Pitman estimator of $\theta$ for the quadratic loss. The statistic $t_{n}$ and the residuals $\left(X_1-\overline{X}_{n},\ldots,X_{n-1}-\overline{X}_{n}\right)$ together determine the sample point. Thus, if $t_n$ is a UMVUE, it is a function of complete sufficient statistic. Hence, for samples from location parameter families we have a complete description of the UMVUEs: they are statistics depending on the data through the complete sufficient statistic.

An alternative `` all or nothing" holds for the location parameter families: either all estimative function of parameter possess UMVUEs or none (except constants).
\\
\noindent
{\bf Example 5}.\\
Let $\displaystyle \left(X_1,\ldots,X_n\right)$ be a sample from an arbitrary location-scale parameters family $F\left((x-\theta)/\sigma\right)$ with $\theta \in \mathbb{R}$ and $\sigma \in \mathbb{R}_{+}$ as parameters. Suppose that a vector $\displaystyle \left(U_1, U_2\right)=\left(U_1(X_1,\ldots,X_n),\right.$
$\left.U_2(X_1,\ldots,X_n)\right)$ is a UMVUE meaning that the covariance matrix $\displaystyle V_{\theta, \sigma }\left(U_1, U_2\right)$ of $\left(U_1, U_2\right)$ and the covariance matrix $\displaystyle V_{\theta, \sigma }(\widetilde{U}_1, \widetilde{U}_2)$  of any unbiased estimator $(\widetilde{U}_1, \widetilde{U}_2)$ of
$(E_{\theta, \sigma }\widetilde{U}_1, E_{\theta, \sigma }\widetilde{U}_2)$ satisfy the inequality
\begin{equation}
\label{eq:ex}
V_{\theta, \sigma }\left(U_1, U_2\right)\leq V_{\theta, \sigma }(\widetilde{U}_1, \widetilde{U}_2),\,\,\,\theta \in \mathbb{R},\,\,\,\sigma \in \mathbb{R}_{+}
\end{equation}
in the standard sense ($A \leq B$ $\Leftrightarrow$ $A-B$ is a positive semi-definite matrix.)
Note that \eqref{eq:ex} is stronger that $U_1$ and $U_2$ are separately UMVUEs.
Plainly \eqref{eq:ex} implies that a linear combination $c_1U_1+c_2U_2$ with any constants $c_1,c_2$ is a UMVUE and this is independent of the vector
$\displaystyle W=\left(\frac{X_1-\overline{X}}{S},\ldots,\frac{X_n-\overline{X}}{S}\right)$ of the standardized residuals. Independency of any linear combination $c_1U_1+c_2U_2$ of $W$ is equivalent to independence of $(U_1, U_2)$ and $W$ (this is stronger than independence of $U_1$ and $W$, $U_2$ and $W$).

If the $n$-dimensional vector $\displaystyle \left(U_1, U_2, \frac{X_1-\overline{X}}{S},\ldots,\frac{X_n-\overline{X}}{S}\right)$ is in $1-1$ correspondence with $\left(X_1,\ldots,X_n\right)$, then $(U_1,U_2)$ is a complete sufficient statistic for $(\theta, \sigma)$. Thus, for sampling from a location-scale family any estimable pair
$\left(g_{1}(\theta, \sigma), g_{2}(\theta, \sigma)\right)$ of parametric functions admits a UMVUE.

\label{se:ex}
\section*{Acknowledgements}
The authors would like to thank C.~R. Rao and Bimal~K. Roy for providing a copy of the dissertation of K. Unni defended in the Indian Statistical Institute back in 1978,
and Larry Shepp for many helpful discussions.
Pavel Chigansky made many very valuable comments on the different versions of the paper. The authors and the paper owe much to him.
Thanks to Iosif Pinelis for helpful comments and Thomas Seidman for a reference.
The Editor and an Associate Editor made many detailed comments that significantly improved the original version of the paper.
In particular, Section \ref{se:ex} owes its appearance to them.
The work of the second author was partially supported by a 2012 UMBC Summer Faculty Fellowship grant.

\appendix
\section{Proof}\label{a:aaa}
{\bf Proof of Lemma \ref{lem:KB}}
\begin{proof}
For any $C \in \mathcal{C}$, $B \in \mathcal{B}$
\begin{align*}
&
P_{\theta}(C\cap B|\mathcal{C})=E_{\theta}\left(\mathbbm{1}_{C\cap B}|\mathcal{C}\right)=\mathbbm{1}_{C}P(B),
\end{align*}
due to ancillarity of $\mathcal{B}$ and $\mathcal{P}$-independent of $\mathcal{C}$ and $\mathcal{B}$.
Similarly, for $A=\cup_{i=1}^{n}(C_i\cap B_i)$ for pairwise disjoint $C_1\cap B_1, \ldots, C_n\cap B_n$ with
$C_i \in \mathcal{C}, B_i \in \mathcal{B}, i=1,\ldots n$
\begin{align*}
&
P_{\theta}(A|\mathcal{C})=\sum_{i=1}^{n}\mathbbm{1}_{C_{i}}P(B_i)=P(A|\mathcal{C})
\end{align*}
does not depend on $\theta$. Thus for any $A$ from the algebra ${\mathcal{A}}_{0}$ generated by
$\cup_{i=1}^{n}(C_i\cap B_i)$,
\begin{align*}
&
P_{\theta}(A\cap C)=\int_{C}P_{\theta}(A|\mathcal{C})dP_{\theta}=\int_{C}P(A|\mathcal{C})dP_{\theta}.
\end{align*}
Since, $P_{\theta}(A\cap C)$ for $A \in {\mathcal{A}}_{0}$ determines $P_{\theta}(A\cap C)$
via monotone convergence for $A \in {\mathcal{A}}$, one has for any $C \in \mathcal{C}$,
\begin{align*}
&
P_{\theta}(A\cap C)=\int_{C}P_{\theta}(A|\mathcal{C})dP_{\theta}=\int_{C}P(A|\mathcal{C})dP_{\theta},\,\,\,A\in \mathcal{A},
\end{align*}
proving sufficiency of $\mathcal{C}$ for $\mathcal{P}$.
\end{proof}


\begin{thebibliography}{9}

\bibitem{r1}
\textsc{Akhiezer, N. I.} (1965).
\textit{The classical moment problem and some related questions in analysis}.
Hafner Publishing Co., New York.
\MR{0184042}

\bibitem{r2}
\textsc{Bahadur, R. R.} (1957).
On unbiased estimates of uniformly minimum variance.
\textit{Sankhya Ser. A}
\textbf{18}, 211--224.
\MR{0092319}

\bibitem{r3}
\textsc{Barra, J-R.} (1971).
\textit{Notions Fondamentales de Statistique Math\'{e}matique}.
Dunod, Paris.
(English Translation (1981):
\textit{Mathematical Basis of Statistics}.
Academic Press, New York).
\MR{0402992}

\bibitem{r4}
\textsc{Basu, D.} (1955).
On statistics independent of a complete sufficient statistic.
\textit{Sankhya Ser. A}
\textbf{15}, 377--380.
\MR{0074745}

\bibitem{r5}
\textsc{Basu, D.} (1958).
On statistics independent of sufficient statistics.
\textit{Sankhya Ser. A}
\textbf{18}, 223--226.
\MR{0105758}

\bibitem{r6}
\textsc{Bondesson, L.} (1975).
Uniformly minimum variance estimation in location parameter families.
\textit{ Ann. Statist.} \textbf{3}, 637--600.
\MR{0652532}

\bibitem{r7}
\textsc{Bondesson, L.} (1983).
On uniformly minimum variance unbiased estimation when no complete sufficient
statistics exist.
\textit{ Metrika} \textbf{30}, 49--54.
\MR{0701978}

\bibitem{r8}
\textsc{Casella, G.} and \textsc{Berger, R. L.} (2002).
\textit{Statistical Inference}, 2nd ed.
Duxbury.

\bibitem{r9}
\textsc{Ferguson, T. S.} (1964).
A characterization of the exponential distribution.
\textit{ Ann. Math. Statist.} \textbf{35}, 1199--1207.
\MR{0168057}

\bibitem{r10}
\textsc{Ferguson, T. S.} (1967).
On characterizing distributions by properties of order statistics.
\textit{ Sankhya Ser. A} \textbf{29}, 265--277.
\MR{0226804}

\bibitem{r11}
\textsc{Fisher, R. A.} (1936).
Uncertain inference.
\textit{ Proceedings of the American Academy of Arts and Sciences} \textbf{71}, 245--258.

\bibitem{r12}
\textsc{Fisher, R. A.} (1973).
\textit{Statistical Methods and Scientific Inference}, 3rd ed.
Hafner Press.
\MR{0346955}

\bibitem{r13}
\textsc{Flatto, F.} and \textsc{Shepp, L.} (1990).
Problem of the Nile.
\textit{ SIAM Review} \textbf{32}, 302--304.

\bibitem{r14}
\textsc{Gather, U.} (1996).
Characterizing distributions by properties of order statistics—a partial review.
\textit{H.N. Nagaraja, P.K. Sen, D.F. Morrison (Eds.), Statistical Theory and Applications}, 89--103.
Springer, New York.
\MR{1462442}

\bibitem{r15}
\textsc{Gleser, L. J.} and \textsc{Healy, J. D.} (1976).
Estimating the mean of a normal distribution with known coefficient of variation.
\textit{J. Amer. Statist. Assoc.} \textbf{71}, 977--981.
\MR{0438563}

\bibitem{r16}
\textsc{Hodges, J. L., Jr.} and \textsc{Lehmann, E. L.} (1951).
Some applications of the Cram\'{e}r-Rao inequality.
\textit{Proceedings of the Second Berkeley Symposium on Mathematical Statistics and Probability, 1950}, 13--22.
University of California Press, Berkeley and Los Angeles.
\MR{0044795}

\bibitem{r17}
\textsc{Hinkley, D. V.} (1977).
Conditional inference about a normal mean with known coefficient of variation.
\textit{Biometrika} \textbf{64}, 105--108.
\MR{0483142}

\bibitem{r18}
\textsc{Kagan, A. M.}, \textsc{Linnik, Yu. V.} and  \textsc{Rao, C. R.} (1965).
On a characterization of the normal law based on a property of the sample average.
\textit{Sankhya Ser. A} \textbf{27}, 405--406.
\MR{0200999}

\bibitem{r19}
\textsc{Kagan, A. M.} (1966).
Two remarks on characterization of sufficiency (Russian).
\textit{Limit Theorems Statist. Inference, Izdat. ``Fan'', Tashkent}, 60--66.
\MR{0207089}

\bibitem{r20}
\textsc{Kagan, A. M.} and  \textsc{Palamodov, V. P.} (1967a).
Incomplete exponential families and minimum variance unbiased estimators, I.
\textit{Theor. Probab. Appl.} \textbf{12}, 36--46.
\MR{0216631}

\bibitem{r21}
\textsc{Kagan, A. M.} and  \textsc{Palamodov, V. P.} (1967b).
Conditions of optimal unbiased estimation of parametric functions for incomplete exponential families with polynomial constraints.
\textit{ Dokl. Akad. Nauk SSSR} \textbf{175}, 1216--1218.
\MR{0217924}

\bibitem{r22}
\textsc{Kagan, A. M.} and  \textsc{Palamodov, V. P.} (1968).
New results in the theory of estimation and testing hypotheses for problems with nuisance parameters.
\textit{ Supplement to Linnik's ``Statistical Problems with Nuisance Parameters"}.
American Mathematical Society, Providence.
\MR{0223000}

\bibitem{r23}
\textsc{Kagan, A. M.}, \textsc{Linnik, Yu. V.} and  \textsc{Rao, C. R.} (1973).
\textit{Characterization Problems in Mathematical Statistics}.
Wiley, New York.
\MR{0346969}

\bibitem{r24}
\textsc{Kagan, A. M.} (2002).
Sufficiency and ancillarity in characterization problems.
\textit{ J. Statist. Plann. Inference} \textbf{102}, 223--228.
\MR{1896484}

\bibitem{r25}
\textsc{Kagan, A. M.} and  \textsc{Rao, C. R.} (2006).
On estimation of a location parameter in presence of an ancillary component.
\textit{Theor. Probab. Appl.} \textbf{50}, 172--176.
\MR{2222746}

\bibitem{r26}
\textsc{Kagan, A. M.} and  \textsc{Konikov, M.} (2006).
The structure of the UMVUEs from categorical Data.
\textit{Theor. Probab. Appl.} \textbf{50}, 466--473.
\MR{2223224}

\bibitem{r27}
\textsc{Kagan, A. M.}, \textsc{Yu, T.}, \textsc{Barron, A.}  and  \textsc{Madiman, M.} (2012).
Contribution to the theory of Pitman estimators.
\textit{Zap. Nauchn. Sem. S.-Peterburg. Otdel. Mat. Inst. Steklov. (POMI)} \textbf{408}, 245--267.

\bibitem{r28}
\textsc{Kariya, T.} (1989).
Equivariant estimation in a model with an ancillary statistic.
\textit{ Ann. Statist.} \textbf{17}, 920--928.
\MR{0994276}

\bibitem{r29}
\textsc{Klebanov, L. B.} (1973).
On the characterization of a family of distributions by the property of independence of statistics.
\textit{Theor. Probab. Appl.} \textbf{18}, 608--611.
\MR{0334357}

\bibitem{r30}
\textsc{Khan, R. A.} (1968).
A note on estimating the mean of a normal distribution with known coefficient of variation.
\textit{J. Amer. Statist. Assoc.} \textbf{63}, 1039--1041.

\bibitem{r31}
\textsc{Lehmann, E. L.} and \textsc{Scheff\'{e}} (1950).
Completness, similar regions, and unbisased estimation: part I.
\textit{Sankhya}
\textbf{10}, 305--340.

\bibitem{r32}
\textsc{Lehmann, E. L.} and \textsc{Casella, G.} (1998).
\textit{Theory of Point Estimation}, 2nd ed.
Springer, New York.
\MR{1639875}

\bibitem{r33}
\textsc{Lehmann, E. L.} and \textsc{Romano, J. P.} (2005).
\textit{Testing Statistical Hypotheses}, 3rd ed.
Springer, New York
\MR{2135927}

\bibitem{r34}
\textsc{Linnik, Yu. V.} (1968).
\textit{Statistical Problems with Nuisance Parameters}.
American Mathematical Society, Providence.
\MR{0223000}

\bibitem{r35}
\textsc{Nayak, T. K.} and  \textsc{Sinha, B.} (2012).
Some aspects of minimum variance unbiased estimation in presence of
ancillary statistics.
\textit{Stat. Prob. Letters} \textbf{82}, 1129--1135.
\MR{2915079}

\bibitem{r36}
\textsc{Pitman, E. J. G.} (1939a).
The estimation of the location and scale parameters of a continuous population of any given form.
\textit{Biometrika} \textbf{30}, 391--421.

\bibitem{r37}
\textsc{Pitman, E. J. G.} (1939b).
Tests of hypotheses concerning location and scale parameters.
\textit{Biometrika} \textbf{31}, 200--215.
\MR{0000382}

\bibitem{r38}
\textsc{Rao, C. R.} (1952).
Some theorems on minimum variance estimation.
\textit{Sankhya Ser. A} \textbf{12}, 27--42.
\MR{0055635}

\bibitem{r39}
\textsc{Unni, K.} (1978).
The theory of estimation in algebraic and analytical exponential families
with applications to variance components models.
PhD. Thesis, Indian Statistical Institute.
Calcutta, India.

\bibitem{r40}
\textsc{Wijsman, R. A.} (1958).
Incomplete sufficient statistics and similar tests.
\textit{ Ann. Math. Statist.} \textbf{29}, 1028--1045.
\MR{0106525}

\bibitem{r41}
\textsc{Zacks, S.} (1971).
\textit{The Theory of Statistical Inference}.
Wiley, New York.
\MR{0420923}

\end{thebibliography}
\end{document}